\gdef\SetFigFontNFSS#1#2#3#4#5{} %Silence pointless warnings due to xfig
\newtheorem{thm}{Theorem}%[section]
\newtheorem{lem}{Lemma}
\newtheorem*{lem*}{Lemma}
\newtheorem*{thm*}{Theorem}
\theoremstyle{definition}
\newtheorem*{dfn*}{Definition}
\theoremstyle{remark}
\newcommand{\set}[1]{\left\{#1\right\}}
\newcommand{\Integer}{\mathbb{Z}}
\newcommand{\Z}{\Integer}
\newcommand{\R}{\mathbb{R}}
\newcommand{\C}{\mathbb{C}}
\newcommand{\eps}{\varepsilon}
\renewcommand{\Re}{\mathrm{Re}\,}
\renewcommand{\Im}{\mathrm{Im}\,}
\DeclareMathOperator{\E}{\mathbb{E}}     % Without under-subscripts
\renewcommand{\Pr}{}
\let\Pr\relax
\DeclareMathOperator{\Pr}{\mathbb{P}}
\newcommand{\1}[1]{\mathbf{1}_{\set{ #1 } }}
\newcommand{\ov}[1]{\overline{#1}}
\newcommand{\ignore}[1]{ }
\newcommand{\fancyf}{\mathcal}
\newcommand{\Aa}{\fancyf{A}}
\newcommand{\Bb}{\fancyf{B}}
\newcommand{\Cc}{\fancyf{C}}
\newcommand{\Dd}{\fancyf{D}}
\newcommand{\Ee}{\fancyf{E}}
\newcommand{\note}[1]{ }
\newcommand{\SLE}{\mathrm{SLE}}
\begin{document}

\title{I knew I should have taken that left turn at Albuquerque}

\author{Gady Kozma\thanks{Faculty of Mathematics and Computer Science, The
Weizmann Institute of Science, Rehovot 76100, Israel. Email:
\texttt{gady.kozma@weizmann.ac.il}}
\and
Ariel Yadin\thanks{Department of Mathematics,
Ben Gurion University,
Be'er Sheva 84105, Israel.
Email: \texttt{yadina@bgu.ac.il} }
}

\date{ }

\maketitle

\begin{abstract}
We study the Laplacian-$\infty$ path as an extreme case of the Laplacian-$\alpha$
random walk.  Although, in the finite $\alpha$ case, there is reason to believe
that the process converges to $\SLE_\kappa$, with $\kappa = 6/(2 \alpha+1)$, we show
that this is not the case when $\alpha=\infty$.  In fact, the scaling limit depends heavily
on the lattice structure, and is not conformal (or even rotational) invariant.
\end{abstract}

\section{Introduction}

In recent years, much study has been devoted to the phenomena
of conformally invariant scaling limits of processes in $\Z^2$,
the two-dimensional Euclidean lattice.
The invention of Schramm-Loewner Evolution ($\SLE$) \cite{Schramm00SLE}, and
subsequent development, have lead to many new results regarding such limits.

The first process considered by Schramm was loop-erased random
walk, or LERW. This is a process in which one considers a random walk
(on some graph) and then erases the loops in the path of that walk,
obtaining a self-avoiding path.  LERW was first defined by Lawler
\cite{Lawler80LERWdfn}.  In \cite{LSW04LERW}, Lawler, Schramm and
Werner proved that the scaling limit of LERW on $\Z^2$ is $\SLE_2$.

LERW is related to another process,
the so called {\em Laplacian-$\alpha$ random walk}, defined in
\cite{LEP86}.  In fact, LERW and Laplacian-$1$ random walk are the
same process \cite{L87}.  For completeness, let us define the
Laplacian-$\alpha$ random walk.

Let $\alpha \in \R$ be some real parameter.
Let $G = (V,E)$ be a graph, and let $w$ be a vertex (the target).
Let $S \subset V$ be a set not containing $w$.
Let $f_{w,S;G}:V \to [0,1]$ be the function defined
by setting $f_{w,S;G}(x)$ to be the probability
that a random walk on $G$ started at $x$ hits $w$
before $S$. $f_{w,S;G}$ is $1$ at $w$, $0$ on $S$ and harmonic in $G
\setminus (S \cup \set{w})$, and if the graph $G$ is finite and
connected, then it is the unique function satisfying these three conditions.
Hence $f_{w,S;G}$ is usually called the {\em solution to the Dirichlet
problem in $G$ with boundary conditions $1$ on $w$ and $0$ on $S$}.

\begin{dfn*}[Laplacian-$\alpha$ random walk]
%\label{dfn:Lap-alpha}
Let $G$ be a graph.
Let $s \neq w$ be vertices of $G$.  The Laplacian-$\alpha$ random walk
on $G$, starting at $s$ with target $w$, is the process $(\gamma_t)_{t \geq 0}$
such that $\gamma_0 = s$, and such that for any $t>0$ the distribution of
$\gamma_t$ given $\gamma_0,\gamma_1,\ldots,\gamma_{t-1}$ is
$$ %\forall  \ x \sim \gamma_{t-1} \qquad
\Pr [ \gamma_t = x \ | \ \gamma_0,\gamma_1,\ldots,\gamma_{t-1} ] =
\1{ x \sim \gamma_{t-1} } \cdot
\frac{ f^\alpha(x) }{ \sum_{y \sim \gamma_{t-1}} f^{\alpha}(y) } , $$
where $f = f_{w,\gamma[0,t-1];G}$ is the solution to
the Dirichlet problem in $G$ with boundary conditions
$1$ on $w$ and $0$ on $\gamma[0,t-1] = \set{ \gamma_0, \gamma_1,
\ldots, \gamma_{t-1} }$ and where $x\sim y$ means that $x$ and $y$ are
neighbors in the graph $G$.
The process terminates when first hitting $w$. Here and below we use
the convention that $0^\alpha = 0$ even for $\alpha \leq 0$.
\end{dfn*}

As already remarked, the case $\alpha=1$ is equivalent to LERW in any
graph, and therefore in two dimensional lattices has $\SLE_2$ as its
scaling limit. Another case which is understood is the case $\alpha=0$
which is simply a random walk which chooses, at each step, equally among
the possibilities which do not cause it to be trapped by its own
past. Examine this process on %the {\em bonds} of the
the hexagonal (or honeycomb) lattice. This is a lattice with degree 3 so
the walker has at most 2 possiblities at each step.
A reader with some patience will be able to resolve some
topological difficulties and convince herself that this process is
exactly equivalent to an exploration of critical percolation on the
{\em faces} of the hexagonal lattice (say with black-white boundary
conditions and the edges between boundary vertices unavailable to the
Laplacian random walk\footnote{To the best of our knowledge this was
  first noted in \cite{LawlerLaplacian_b}, in the last paragraph of \S
 2.}. By \cite{S01, W09}, this has $\SLE_6$ as its
scaling limit.

In \cite{LawlerLaplacian_b}
%Lawler writes ``we give a
%simple argument to show that we would expect $b = (6-\kappa)/2 \kappa$ for $b > -1/2$.''
Lawler gives an argument that leads one to expect that the scaling limit of the Laplacian-$\alpha$
random walk on $\Z^2$ should be $\SLE_{\kappa}$, for $\kappa= \tfrac{6}{2 \alpha+1}$,
for the range of parameters $\alpha > -1/2$.
%As stated above, this has been proven for $\alpha=1$, which is LERW (this is also known for $\alpha=0$,
%which corresponds to the percolation exploration process, but only on the triangular lattice, not $\Z^2$).
In a public talk about this heuristic argument given in Oberwolfach in
2005 (which GK attended) Lawler stated (paraphrasing) that the
argument can be trusted less and less as $\alpha$ increases. Therefore
it seems natural to stress it as far as possible by setting
$\alpha=\infty$. %One should get an $\SLE_0$ process which is a
%deterministic process which simply traces a straight line from the
%starting point to the target.
Let us define the process formally.

\begin{dfn*}[Laplacian-$\infty$ path]
Let $G$ be a graph.
Let $s \neq w$ be vertices of $G$.  The Laplacian-$\infty$ path
on $G$, starting at $s$ with target $w$, is the path $(\gamma_t)_{t \geq 0}$
such that $\gamma_0 = s$, and such that for any $t>0$,
given $\gamma_0,\gamma_1,\ldots,\gamma_{t-1}$,
we set $\gamma_t$ to be the vertex $x \sim \gamma_{t-1}$
that maximizes $f_{w,\gamma[0,t-1];G}(x)$ over
all vertices adjacent to $\gamma_{t-1}$.
If there is more than one maximum adjacent to $\gamma_{t-1}$,
one is chosen uniformly among all maxima.
The path terminates when first hitting $w$.
\end{dfn*}

Note that except for the rule in the case of multiple maxima,
the Laplacian-$\infty$ path is not random.

The conjecture that the Laplacian-$\alpha$ random walk converges to $\SLE_\kappa$ for $\kappa = \tfrac{6}{2 \alpha+1}$
naturally leads one to ask whether this also holds for $\alpha = \infty$; that is,
does the Laplacian-$\infty$ path on $\delta \Z^2$ converge to the
(non-random) path $\SLE_0$, as $\delta$ tends to $0$?
Specifically, if this is true, the conformal invariance of $\SLE_0$ hints
that this should hold regardless of the lattice one starts with,
or at least for any rotated version of $\Z^2$.

We will show that, perhaps surprisingly, this is not the case. In
fact, the process on $\Z^2$ can be described almost
completely. Without further ado let us do so

\begin{thm}
\label{thm:main thm}
There exists a universal constant $C$ such that for any
    $(a,b)\in\Z^2$ with $a>|b|\ge C$ the following holds. The
    Laplacian-$\infty$ path starting at $(0,0)$ with target $(a,b)$ has
    $\gamma_t=(t,0)$ for all $t$ with probability 1.

If $a=b \geq C$ then $\gamma_t=(t,0)$ with probability $1/2$ 
and $\gamma_t = (0,t)$ with probability $1/2$.
\end{thm}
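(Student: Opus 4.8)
The plan is to prove the result by induction on $t$, reducing the whole statement to a single local comparison at the tip of a straight slit. Suppose that up to time $t-1$ the path has gone straight east, so $\gamma[0,t-1]=S:=\{(0,0),(1,0),\dots,(t-1,0)\}$ and the current position is the tip $(t-1,0)$. Writing $f=f_{w,S;\Z^2}$, the west neighbour $(t-2,0)$ lies in $S$ and so has $f=0$, so it suffices to prove the strict inequalities $f(t,0)>f(t-1,1)$ and $f(t,0)>f(t-1,-1)$; then the step is forced to $(t,0)$ and the induction continues. By reflecting the lattice across the $x$-axis I may assume $b>0$; both inequalities come out of the same local analysis, so I focus on $f(t,0)>f(t-1,1)$ and note that $f(t,0)>f(t-1,-1)$ follows identically.

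The central tool is the identity $f(x)=g(x,w)/g(w,w)$, where $g(\cdot,\cdot)$ is the Green's function of simple random walk killed on hitting $S$; since the walk is recurrent and $S$ is a nonempty finite set this is well defined, and as the normalisation $g(w,w)$ is common to all neighbours, $f(t,0)>f(t-1,1)$ is equivalent to $g((t,0),w)>g((t-1,1),w)$. As a function of its first argument, $g(\cdot,w)$ is nonnegative, harmonic off $S\cup\{w\}$, and vanishes on $S$. The key geometric input is that the tip-to-target distance is $\sqrt{(a-t+1)^2+b^2}\ge |b|\ge C$ for every $t$, so $w$ is never nearer than $C$ to the tip. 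Fixing scales $1\ll R_0\ll C$ and letting $B$ be the box of radius $R_0$ about the tip, the slit $S\cap B$ is, for all but finitely many small $t$, a half-line entering $B$ from the west and ending at the tip, a configuration \emph{independent of $t$, $a$ and $b$}. A quantitative boundary Harnack principle at the slit tip then shows that on $B$ the function $g(\cdot,w)$ is, up to a relative error $O(\sqrt{R_0/C})$, a positive multiple of the universal minimal harmonic function $\psi$ of the half-line slit (the discrete analogue of $\sqrt r\cos(\theta/2)$, with $\theta$ measured from the forward direction and the slit at $\theta=\pm\pi$), whence
\[
\frac{g((t,0),w)}{g((t-1,1),w)}=\frac{\psi(1,0)}{\psi(0,1)}\bigl(1+O(\sqrt{R_0/C})\bigr).
\]
It remains to see that the universal ratio exceeds $1$ by a definite margin: in the continuum $\psi=\sqrt r\cos(\theta/2)$ has forward value $\cos 0=1$ strictly above its sideways value $\cos(\pi/4)=1/\sqrt2$, and the discrete profile inherits a strict gap $\psi(1,0)>\psi(0,1)$ (via discrete complex analysis through the discrete square-root map, or a direct monotonicity argument). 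Choosing $R_0$ and then $C$ large makes the $O(\sqrt{R_0/C})$ error smaller than this fixed gap, giving $f(t,0)>f(t-1,\pm1)$ for all $t$ and completing the induction. The hypothesis $a>|b|$ enters only at the first step, where $S=\{(0,0)\}$ is a single point with no slit: there the reflection across $y=x$ sends $w=(a,b)$ to $(b,a)$, and since $a>b>0$ a monotonicity-of-harmonic-measure lemma gives $f(1,0)>f(0,1)$, so east is the strict maximiser and the induction starts correctly. (The finitely many $t$ with $t-1<R_0$, where the whole slit sits inside $B$, are handled by the same far-field argument with the still forward-peaked universal profile of a finite slit.)

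For $a=b\ge C$ the first step is genuinely tied: reflection across $y=x$ fixes both $S=\{(0,0)\}$ and $w=(a,a)$ while swapping $(1,0)\leftrightarrow(0,1)$, so $f(1,0)=f(0,1)$, and both strictly exceed $f(-1,0)=f(0,-1)$ since $w$ lies in the first quadrant. Thus there are exactly two maxima and the tie-breaking rule sends the path east with probability $1/2$ and north with probability $1/2$. Conditioned on the east choice the configuration is no longer diagonally symmetric, but the slit-tip argument applies verbatim (it never used $a>b$, only $\mathrm{dist}(w,\mathrm{tip})\ge C$), so the path continues east forever; conditioned on north, the $y=x$ reflection gives $\gamma_t=(0,t)$. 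This yields the $1/2$–$1/2$ split.

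The main obstacle is the quantitative, uniform boundary Harnack estimate: one must control $g(\cdot,w)/\psi$ on $B$ with error small \emph{uniformly in $t$ and in the direction of $w$ from the tip}, including the delicate regime $t\gg a$ where $w$ recedes far to the west just above the slit (angle near $\pi$). There the leading coefficient $A$ multiplying $\psi$ degenerates like $\cos(\theta_w/2)$, and the danger is that the subleading coefficient (the $r\sin\theta$ term) fails to degenerate as fast and overwhelms the leading one. The saving feature the estimate must make rigorous is that the ratio of these coefficients carries a compensating factor and stays $O(\mathrm{dist}(w,\mathrm{tip})^{-1/2})=O(C^{-1/2})$ independently of the angle of $w$; establishing this uniform coefficient bound—via the conformal map $z\mapsto\sqrt{z-\mathrm{tip}}$ to a half-plane together with Lawler-type discrete-to-continuous coupling bounds, or via the explicit discrete slit Poisson kernel—is the technical heart of the proof.
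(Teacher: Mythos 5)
Your skeleton --- induction on $t$, reduction to the strict comparison $f(\mathrm{tip}+1)>f(\mathrm{tip}\pm i)$ at the tip of a straight slit, reflection through the diagonal for the first step and for the $a=b$ tie --- is exactly the paper's. The difference is the engine: the paper proves the tip comparison by a reflection coupling (walks started at the east and north neighbours, mirrored through the diagonal until they meet, an explicit six-step event supplying a gain of $4^{-6}\Pr_i[T(w)<T(I)]$, and its Lemma \ref{lem:I,D vs. I} showing the loss term $\Pr_i[T(w)<T(I\cup D)]$ is smaller by a factor $|w|^{-1/2}$), whereas you replace all of this by a quantitative boundary Harnack principle plus a universal tip profile. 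Both of the inputs your argument actually rests on are asserted, not proven, and one of them is asserted for a reason that is wrong. The strict gap $\psi(1,0)>\psi(0,1)$ for the \emph{discrete} profile does not follow from the continuum values $\cos 0=1>\cos(\pi/4)$: the comparison is made at lattice distance $1$ from the tip, where the discrete harmonic function vanishing on the slit is not approximated by $\sqrt{r}\cos(\theta/2)$ in any useful relative sense --- the lattice corrections are of relative order $1$ there (for instance, for the length-one slit $\{-1,0\}$ the potential-kernel computation gives tip-neighbour values $2-4/\pi$ and $2/\pi$, whose ratio $\approx 1.14$ is nowhere near $\sqrt{2}$). The same issue recurs in your parenthetical treatment of short slits: for every slit length $\ell$ below your cutoff you need the forward-peakedness $\phi_{I_\ell}(1,0)>\phi_{I_\ell}(0,1)$ of the finite-slit potential, a family of strict lattice-scale inequalities for which you give no argument; the natural proof of each of them is precisely the reflection coupling the paper is built on.

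The second, larger gap is the one you yourself call the technical heart: the quantitative boundary Harnack estimate with \emph{ratio convergence}, valid down to distance $1$ from the tip and uniform over slit lengths and target positions. This is not available off the shelf in the discrete setting: what can be cited (Kesten-type hitting estimates, the discrete Harnack inequality of Lawler--Limic, discrete BHP in comparability form) gives two-sided bounds with a constant $K>1$, which is useless here, since the inequality you need, $g(1,w)/g(i,w)>1$, is destroyed by any comparability constant exceeding $\psi(1,0)/\psi(0,1)$. Proving the ratio-convergence form at lattice scale is a genuine project (a multiscale iteration whose base case again amounts to a coupling argument), not a citation. Your error bookkeeping also mixes two regimes: $O(\sqrt{R_0/C})$ is the error of the harmonic expansion in a disk of radius $\sim C$, which requires the slit to cross that disk, i.e.\ $t\gtrsim C$; for $R_0\le t-1\ll C$ the relevant statement is a BHP in the fixed box $B$ with error $O(R_0^{-\alpha})$, independent of $C$. (Your worry about the angle $\theta_w\to\pi$, and the compensating $\sin(\theta_w/2)$ factor, is correct in the continuum for the first regime, but it dissolves entirely in the in-box formulation, since there the position of $w$ enters only through harmonicity of $g(\cdot,w)$ in $B$.) By contrast, the paper closes the argument with only the weak Harnack inequality and Kesten's escape estimate, which is exactly why its proof is short. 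So: right reduction, coherent alternative plan, but the two load-bearing estimates of your proof are missing, and the most plausible way to supply them is the coupling argument you were trying to avoid.
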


In other words (and using the symmetries of the problem), the walker
does the first step in the correct direction but then continues forward,
missing the target (unless the target is extremely close to the axis which is the path of the walker)
and goes on to infinity, never ``turning left''. If the target is on a diagonal the walker
chooses among the two possible first steps equally. Comparing to
$\SLE_0$, which is a deterministic (conformal image of a) straight
line from the start to the target, we see that the process is indeed
deterministic, and is indeed a straight line, but is not (necessarily)
aimed at the target, is not rotationally invariant and is not
independent of the lattice --- rotated versions of $\Z^2$ give rise to
different scaling limits.

%Not only do different tilts of $\Z^2$ give rise to different
%limiting curves, these curves never even reach the target $w$ (!) continuing forever forward,
%never ``turning left''.
%Thus, the Laplacian-$\infty$ path on $\Z^2$ started at $0$ with target $w$,
%does not converge to radial $\SLE_0$ from $0$ to $w$.

%The combination of Theorems \ref{thm:main thm} and \ref{thm:gamma1}
%gives the full picture for the Laplacian-$\infty$ path on $\Z^2$
%(or any tilted version of $\Z^2$).
%The path starts by choosing $\gamma_1$ to be the vertex
%adjacent to $0$, nearest (in the graph metric) to the target $w$.
%But after this first step, the path will continue infinitely in that
%same direction, never hitting the target $w$.

To explain the reason for this behavior in a single sentence, one may
say that the pressure of the past of the process outweighs the pull
of the target. For those interested in the proof, let us give a rough
description of the ideas involved by applying them to prove the
following lemma.
\begin{lem*} Let $t>2$, and let $x\ge 1$. Let $p_y$ be the probability
  that a random walk starting from some $y\in\Z^2$ avoids the interval
  $[(-x,0),(0,0)]$ up to time $t$. Then
\[
p_{(1,0)} > (1+c)p_{(0,1)}
\]
for some absolute constant $c>0$.
\end{lem*}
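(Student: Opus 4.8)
The plan is to work throughout with the finite-time avoidance probabilities
$q_s(z) = \Pr_z[\,X_u \notin H \text{ for all } 0 \le u \le s\,]$, where $H = \{(-k,0) : 0 \le k \le x\}$ is the interval in question and $X$ is simple random walk; thus $p_y = q_t(y)$. Two elementary features of $q$ will carry the argument. First, $q_s$ is non-increasing in $s$, since avoiding $H$ for longer is a smaller event; in particular $q_{t-2} \ge q_t$ pointwise. Second, $H$ is fixed by the reflection $\rho(a,b) = (a,-b)$ across the $x$-axis, so $q_s \circ \rho = q_s$, and in particular $q_s((0,-1)) = q_s((0,1))$.

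The key input is a monotonicity estimate across the diagonal $D = \{a=b\}$. Writing $R(a,b) = (b,a)$, I claim that $q_s(z) \ge q_s(Rz)$ whenever $z = (a,b)$ lies strictly below $D$, i.e.\ $b < a$. The geometric point is that every vertex of $H$ satisfies $b \ge a$, with equality only at the tip $(0,0)$, so $H$ sits weakly on the upper side of $D$. I would prove the claim by the reflection coupling across $D$: run the walk $B$ from $Rz$, let $A = R(B)$ until the first time $\sigma$ at which $B$ meets $D$, and let $A,B$ use identical increments thereafter. Before $\sigma$ the walk $A$ stays strictly below $D$ and hence avoids $H$ automatically; at and after $\sigma$ the walks coincide, and if $B$ has avoided $H$ through time $\sigma$ then $B_\sigma \notin H$, so $A$ inherits the avoidance. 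Thus $\{B \text{ avoids } H\} \subseteq \{A \text{ avoids } H\}$, which is the claim. Making this coupling clean --- the simultaneity of the diagonal crossing, and the fact that the open lower half $\{b<a\}$ is disjoint from $H$ --- is the main point of the proof; the remainder is a one-step computation.

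Granting the monotonicity, I would finish by decomposing $q_t$ on the first step at both starting points. Since $(0,0) \in H$ contributes $0$, this gives $4\,q_t((1,0)) = q_{t-1}((2,0)) + q_{t-1}((1,1)) + q_{t-1}((1,-1))$ and $4\,q_t((0,1)) = q_{t-1}((1,1)) + q_{t-1}((-1,1)) + q_{t-1}((0,2))$, so the $(1,1)$ terms cancel in the difference. The pair $(2,0),(0,2)$ is a mirror pair with $(2,0)$ below $D$, so by monotonicity its contribution is nonnegative and may be dropped. For the remaining pair I expand one further step: the neighbour $(-1,0)$ of $(-1,1)$ lies in $H$ --- this is exactly where the hypothesis $x \ge 1$ enters --- and contributes $0$, whereas the mirror neighbour $(0,-1)$ of $(1,-1)$ is alive. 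Pairing the other three neighbours as mirror pairs and applying monotonicity to each leaves precisely this surplus term, so $q_{t-1}((1,-1)) - q_{t-1}((-1,1)) \ge \tfrac14\, q_{t-2}((0,-1))$. Invoking the $x$-axis symmetry $q_{t-2}((0,-1)) = q_{t-2}((0,1))$ and then monotonicity in time $q_{t-2}((0,1)) \ge q_t((0,1))$, I obtain $q_t((1,0)) - q_t((0,1)) \ge \tfrac1{16}\, q_t((0,1))$. Since both probabilities are strictly positive for $t > 2$ and the terms discarded along the way are strictly positive, this yields $p_{(1,0)} > (1+c)\,p_{(0,1)}$ with $c = \tfrac1{16}$.
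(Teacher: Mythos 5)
Your proof is correct and is essentially the paper's own argument: your key monotonicity $q_s(z)\ge q_s(Rz)$ is exactly the paper's reflection coupling through the diagonal $\set{(x,x) : x\in\Z}$, and your surplus term comes from the same two-step path $(0,1)\to(-1,1)\to(-1,0)$ (dead precisely because $x\ge 1$) whose mirror image $(1,0)\to(1,-1)\to(0,-1)$ survives. Your first-step expansions are just explicit bookkeeping for the coupled difference event the paper invokes, and they yield the same quantitative bound $\tfrac{1}{16}\,q_{t-2}((0,1))\ge\tfrac{1}{16}\,p_{(0,1)}$ (with strictness coming from the time-monotonicity step, a harmless refinement of your closing remark).
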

\begin{proof}[Proof sketch] Couple two walkers starting from these two
  points so that their paths are a reflection through the diagonal
  $\{(x,x):x\in\Z\}$ until they first hit, and then they move
  together. This shows that $p_{(1,0)}\ge p_{(0,1)}$. Further, since
  it is possible for the walker starting from $(0,1)$, in 2 steps, to
  hit $(-1,0)$ without the other walker hitting the forbidden
  interval, we see that the difference $p_{(1,0)}-p_{(0,1)}$ is of the
  same order of magnitude as each of them.
\end{proof}

\subsection{Generalizations and speculations}
%\begin{rem}
Although we use a planar argument,
some simple adaptations of our methods
should work also for higher dimensions; i.e.
for Laplacian-$\infty$ paths on $\Z^d$ (instead of reflecting through
a diagonal, one needs to reflect through a hyperplane orthgonal to a
vector of the form $e_1\pm e_i$ where $e_i$ is the
$i$\textsuperscript{th} standard basis vector).
Also, slight variations on the methods used can
produce a similar result for the Laplacian-$\infty$ path
on the triangular lattice.%, in which case it is known
%that Laplacian-$0$ random walk converges to $\SLE_6$
%(and the Laplacian-$1$ rnadom walk converges to $\SLE_2$).
%\end{rem}

There is some awkwardness in our comparison to SLE since we prove our
results on the whole plane, where SLE is not well defined. To rectify
this one might examine our process in a large domain $\Dd$, directed at one
point $w$ on its boundary (i.e.~solve the Dirichlet problem with boundary
conditions 0 on the path $\gamma$ and on $\partial\Dd\setminus w$ and 1 on
$w$). This process may be readily compared to (time reversed) radial $\SLE_0$. While we
cannot analyze this process until the time it hits $w$, our methods do
show that the process is a straight line along one of the axes until
almost hitting the boundary of $\Dd$, which is very far from radial
$\SLE_0$ which should be the \emph{conformal image} of a straight line
from $0$ to $w$, which is a smooth path but not necessarily a straight
line, and definitely not necessarily aligned with one of the
axes. Another natural variation is starting from the boundary namely letting
$s\in\partial \Dd$ and solving the Dirichlet problem with 0 on $\gamma
\cup \partial \Dd$ and 1 on some $w\in\Dd$. Analyzing this process
using our methods requires some more assumptions on $\Dd$ but it does
work, for example, for $\Dd$ being a square and $s$ not too close to
one of the corners. One gets that the path is a straight line
perpendicular to the boundary almost until hitting the facing
boundary, at which point our analysis no longer works, but again, this
is quite enough to see that the process is very far from $\SLE_0$
(radial or chordal --- depending on whether $w$ is inside $\Dd$ or on
its boundary). We
will not prove either claim as they are similar to those that we do prove
with only some minor additional technical difficulties.

We did some simulations on the behavior of the process on a $600\times 600$
torus. Here the process must hit the target (this is easily seen on any
finite graph).
\begin{figure}
\centering
\input{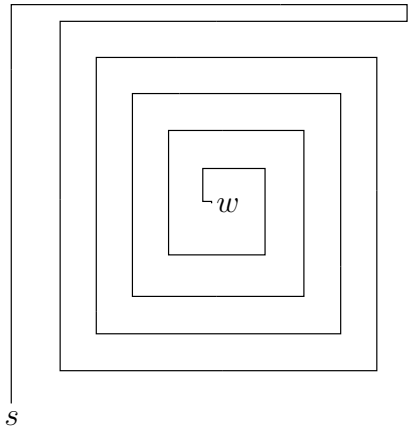}
\caption{Laplacian-$\infty$ on a $600\times 600$ torus.}
\label{fig:600}
\end{figure}
See Figure \ref{fig:600}. As one can see the process does hit the
target but takes its time to do so, turning only when it is about to
hit its past. Some aspects of the picture could definitely do with
some explanation: why does the process turn around quickly after the
first round (the very top of the picture)? We have no proof and only a
mildly convincing heuristic explanation for this behavior.

For an explanation of the name of this paper, see \cite{B45}.

\subsection{Acknowledgements}
We wish to thank N. Aran for pointing us to \cite{B45}. The
torus simulations would not have been possible without Timothy Davis'
SuiteSparseQR, a library for fast solution of sparse self-adjoint
linear equations.

\section{Proof}

\subsection*{Notation.}
$\Z^2$ denotes the discrete two dimensional Euclidean lattice;
we denote the elements of $\Z^2$ by their complex counterparts,
e.g. the vector $(1,2)$ is denoted by $1+2i$.
%By $e^{i \theta} \Z^2$ we refer to the tilted lattice, i.e.
%$e^{i \theta} \Z^2 = \set{ e^{i \theta} z \ : \ z \in \Z^2 }$.

$\Pr_x$ and $\E_x$ respectively, denote the measure and expectation of
a simple (nearest-neighbor, discrete time) random walk
on $\Z^2$, $(X_t)_{t \geq 0}$, started at $X_0 = x$.
For a set $S \subset \Z^2$, we denote by $T(S)$ the hitting time of $S$; that is
$$ T(S) = \inf \set{ t \geq 0 \ : \ X_t\in S } , $$
%$$ T(S) = \inf \set{ t  \ : \ (X_{t-1},X_t] \cap S \neq \emptyset } , $$
%where $(z,w] = \set{ (1-\varsigma) z + \varsigma w \ : \ 0 <
%\varsigma \leq 1 }$.
Occasionally we will use $T(S)$ for a subset $S\subset \C$ that is
not discrete, and in this case the hitting time of $S$ is the first
time the walk passes an edge that intersects $S$.
We also use the notation
$T(z,r) = T(\set{ w \ : \ |w-z| \ge r})$, the exit time from the ball
of radius $r$ centered at $z$ (we will always use it with the starting point inside
the ball).
For a vertex $z \in \Z^2$ we use the notation $T(z) = T(\set{z})$.

We will denote universal positive constants with $c$ and $C$ where $c$ will
refer to constants ``sufficiently small'' and $C$ to constants
``sufficiently large''. We will number some of these constants
for clarity.

%By $\dist(\cdot,\cdot)$ we denote that metric induced by $|\cdot|$.
%For a point $w$ and a set $S$, $\dist(w,S)$
%is the minimal distance $\dist(w,s)$ over all $s \in S$.

We begin with an auxiliary lemma.

\begin{lem}
\label{lem:I,D vs. I}
There exists a universal constant $C>0$ such that the following holds.
Let $w \in \Z^2$.
Let $D = \set{ x+ix \ : \ x \in \Z}$ be the discrete diagonal.
Let $I = [-x,0] \cap \Z$, for some $x >0$.
Then,
\begin{equation}\label{eq:lemIDI}
\Pr_i [ T(w) < T(I\cup D) ] \leq C |w|^{-1/2} \Pr_i [ T(w) < T(I) ] .
\end{equation}
\end{lem}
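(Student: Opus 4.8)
The plan is to split each of the two probabilities into the cost of first reaching the scale $|w|$ and the cost of then hitting the single vertex $w$ from that scale. The point-hitting factor will be essentially the same on both sides and will cancel, leaving the ratio of two escape probabilities: the probability of reaching radius $\asymp|w|$ from near the origin while avoiding $I\cup D$, against the same while avoiding only $I$. Avoiding $D$ confines the walk to the half-plane $\{\Im>\Re\}$, whose escape probability decays like $|w|^{-1}$, whereas the obstacle $I$ is (at most) a half-line, whose escape probability decays only like $|w|^{-1/2}$; the quotient is the asserted $|w|^{-1/2}$.

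First I would dispose of the degenerate cases. A nearest-neighbour step changes $\Im X_t-\Re X_t$ by $\pm1$, so the walk cannot pass from the side $\{\Im>\Re\}$ containing $i$ to $\{\Im<\Re\}$ without landing on $D$. Hence if $\Im w\le\Re w$ the left-hand side of \eqref{eq:lemIDI} vanishes and there is nothing to prove; from now on assume $\Im w>\Re w$, so $\dist(w,D)\ge 1/\sqrt2$. For the numerator I would simply discard $I$, using the monotonicity $\Pr_i[T(w)<T(I\cup D)]\le\Pr_i[T(w)<T(D)]$ (this is lossy, but enough), and evaluate the right-hand side by reflection. The reflection $\rho(z)=i\bar z$ across $D$ is a lattice automorphism fixing $D$ pointwise, so $a\circ\rho=a$ for the potential kernel $a(z)=\tfrac2\pi\log|z|+\kappa+O(|z|^{-2})$; this is exactly what makes the image method \emph{exact} on $\Z^2$. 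It gives the half-plane Green's function $g_D(x,y)=a(x-\rho y)-a(x-y)$ of the walk killed on $D$, whence
\[
\Pr_i[T(w)<T(D)]=\frac{a(i-\rho w)-a(i-w)}{a(w-\rho w)}.
\]
Since $i,w$ lie on the same side of $D$ one has $|i-\rho w|>|i-w|\asymp|w|$, and the asymptotics of $a$ give $a(i-\rho w)-a(i-w)\asymp \dist(w,D)/|w|^2$ and $a(w-\rho w)\asymp\max(\log\dist(w,D),1)$, so that
\[
\Pr_i[T(w)<T(I\cup D)]\ \le\ C\,\frac{\dist(w,D)}{|w|^{2}\,\max(\log\dist(w,D),1)}.
\]

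The heart of the matter is the matching lower bound for the denominator,
\[
\Pr_i[T(w)<T(I)]\ \ge\ c\,\frac{1}{\sqrt{|w|}\,\log|w|},
\]
which I would prove uniformly in the length of $I$ and in $w$. Splitting at the exit time $T(0,r)$ of the ball of radius $r\asymp|w|$, the contribution of paths that reach radius $r$ before $T(I)$ and then hit $w$ factorises, up to constants, into an escape probability and a one-point probability. The escape probability $\Pr_i[T(0,r)<T(I)]$ is at least $c|w|^{-1/2}$; its smallest value occurs when $I$ is longer than $|w|$, in which case $I$ looks at scale $|w|$ like the half-line $(-\infty,0]$ and the bound is the half-line (Beurling-type) escape estimate, equivalently the exponent $1/2$ produced by the conformal straightening $z\mapsto\sqrt z$. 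The one-point probability, that of reaching the vertex $w$ from distance $\asymp|w|$ before leaving a ball of radius $\asymp|w|$ about $w$, is at least $c/\log|w|$ by the standard two-dimensional hitting estimate.

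Dividing the two displays and using $\dist(w,D)\le|w|$, the ratio is at most
\[
C\,|w|^{-1/2}\cdot\frac{\dist(w,D)\,\log|w|}{|w|\,\max(\log\dist(w,D),1)},
\]
and the prefactor is bounded by an absolute constant (it is largest, of order $1$, exactly when $w$ is well inside the half-plane, $\dist(w,D)\asymp|w|$, where the two logarithms cancel). This is \eqref{eq:lemIDI}. The main obstacle is the uniformity of the denominator bound: one must control $\Pr_i[T(w)<T(I)]$ simultaneously for \emph{every} length of $I$ --- the half-line case being the genuinely extremal one in which the factor $|w|^{-1/2}$ is really lost --- and for every angular position of $w$, including $w$ just past the tip of $I$ or close to $D$, where the escape and one-point factors both degrade and must be shown to still multiply to at least $c/(\sqrt{|w|}\log|w|)$. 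Turning the informal ``escape, then hit'' splitting into a genuine inequality --- handling the repeated crossings of the annulus at scale $|w|$ by the strong Markov property and comparing the various boundary starting points by a Harnack argument --- is where the real work lies.
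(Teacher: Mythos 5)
There is a genuine gap, and it is not just the uniformity issue you flag at the end: both of your key displayed estimates are false in the regime where $w$ lies close to the \emph{interior} of $I$, and in exactly that regime your plan of cancelling the point-hitting factors cannot work. Take $w=-k+i$ and $x\ge 2k$ with $k$ large, so $w$ sits at unit distance above the middle of $I$. Your claimed lower bound $\Pr_i[T(w)<T(I)]\ge c/(\sqrt{|w|}\log|w|)$ fails there: writing $g_I$ for the Green's function of the walk killed on $I$, symmetry of $g_I$ and the first-visit decomposition give $\Pr_i[T(w)<T(I)]=\Pr_w[T(i)<T(I)]\,g_I(i,i)/g_I(w,w)$, and since $i$ and $w$ are each lattice-adjacent to a point of $I$, both diagonal terms lie in $[1,4]$; hence $\Pr_i[T(w)<T(I)]\asymp\Pr_w[T(i)<T(I)]$. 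Since $|i-w|=k$, this is at most $\Pr_w[T(w,k/2)<T(I)]$, and inside the ball of radius $k/2$ around $w$ \emph{every} lattice point of the real axis belongs to $I$ (this uses $x\ge2k$), so this equals the probability of escaping from height $1$ to distance $k/2$ before hitting the full real axis, which is $\asymp 1/k$ by the classical half-plane (gambler's-ruin type) estimate --- and in fact the true order of $\Pr_i[T(w)<T(I)]$ is $k^{-3/2}$, as the conformal map $z\mapsto\sqrt z$ suggests. Either way it is far below $c/(\sqrt{k}\log k)$. The step where this is lost is your ``one-point probability $\ge c/\log|w|$'': the relevant event is hitting $w$ before $I$, not before leaving a ball around $w$, and when $w$ hugs $I$ that costs a power of $|w|$, not a logarithm. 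Independently, your numerator bound is too lossy in the same regime: discarding $I$ gives, by your own image-method formula with $\dist(w,D)\asymp k$, only $\Pr_i[T(w)<T(D)]\asymp 1/(k\log k)$, whereas the lemma demands $\Pr_i[T(w)<T(I\cup D)]\le C|w|^{-1/2}\Pr_i[T(w)<T(I)]\le Ck^{-1/2}\cdot C/k=Ck^{-3/2}$. Since $1/(k\log k)\gg k^{-3/2}$, no repair of the denominator estimate can save the scheme; the very first inequality $\Pr_i[T(w)<T(I\cup D)]\le\Pr_i[T(w)<T(D)]$ already gives away more than the lemma allows. (The lemma is still true here --- the left side is governed by the wedge of angle $3\pi/4$ where $I$ meets $D$ and is of order $k^{-7/3}$ --- but neither of your bounds sees this.) Note also that this regime cannot be excluded as irrelevant: in the proof of the main theorem the path eventually passes the target, after which the translated target sits at bounded height above a long interval.

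The root problem is the opening sentence of your plan: the point-hitting factors are \emph{not} ``essentially the same on both sides'' unless the avoidance of $I$ is kept inside both of them, and once it is kept it can no longer be evaluated by a standard one-point estimate. The paper's proof is organized precisely around this difficulty: it never estimates the probability of hitting $w$ at all. Both probabilities are factored through the \emph{same} function $f(z)=\Pr_z[T(w)<T(I)]$ evaluated on a mid-scale circle $|z|\asymp|w|$ (the sets $U$ and $V$ there); the escape factors are estimated exactly as in your plan ($\le Cr^{-1}$ for avoiding $D$, via the independent diagonal projections rather than the image method, and $\ge cr^{-1/2}$ for avoiding $I$, via Kesten's estimate); and the real technical content is the Harnack-type comparison $\max_{u\in U}f(u)\le C\min_{v\in V}f(v)$, proved by a monotone-path plus slit-annulus-crossing argument. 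That comparison is what survives when $w$ hugs $I$: there $f$ is uniformly tiny on the mid-scale circle, and its smallness cancels in the ratio instead of having to be computed. Your numerator computation via the exact half-plane Green's function and your final prefactor arithmetic are correct as far as they go, but they cannot be assembled into a proof without an ingredient playing the role of this cancellation.
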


Before starting the proof we need to apologize for some of the choices
we made. It is well known that the probability that a
random walk escape from a corner of opening angle $a$ to distance $r$
is of the order $r^{-\pi/a}$. The case of $a=2\pi$ was famously done by
Kesten\footnote{This result is not stated in \cite{K87}
  explicitly but the upper bound can be inferred from results proved
  there (particularly lemma 6) easily, and the lower bound can be
  proved by the same methods.} \cite{K87} and a simpler proof can be found in the book
\cite[\S 2.4]{L91}. The general case can be done using
multiscale coupling to Brownian motion, but we could not find a
suitable reference, and including a full proof would have weighed
down on this paper. 
%In fact, even for the case of escape from 
%a rotated half-plane the only reference we could find is \cite{Kozma07LERW} which
%is far from ideal as it works in a generality which is not
%needed in our case. 
The reader is encouraged to verify that given the
general $r^{-\pi/a}$ claim, both sides of (\ref{eq:lemIDI}) can be calculated
explicitly.
Thus, the exponent $1/2$ on the right side of \eqref{eq:lemIDI} is not optimal,
but is sufficient for our purpose and the proof is far simpler.

\begin{proof}[Proof of Lemma \ref{lem:I,D vs. I}]
Let us recall the aforementioned results regarding escape
probabilities. %proved in \cite[Chapter 2.4]{Lawler91Intersections} and \cite[Section 3.6]{Kozma07LERW}.
%Let $B(z,r) = \set{ w \in \C \ : \ |w-z| < r}$ be the ball of radius $r$ around $z$.
%Let $B^+(z,r) = \set{ w \in B(z,r) \ : \ \Re(z) \geq 0 }$.
Equations (2.37) and (2.38) of \cite[\S 2.4]{L91}
tell us that for any $r>0$,
\begin{equation}\label{eq:Kesten}
 \Pr_i [ T(0,r) < T(I) , \Re( X_{T(0,r)} ) \geq 0 ] \geq c_1 r^{-1/2}
 ,
\end{equation}
for some universal constant $c_1 >0$.
%An application of Theorem 2 in \cite[\S 3.6]{Kozma07LERW}
%shows that for any $\theta \in [0,\pi)$, and any $r>0$,
%\begin{equation}\label{eq:diag}
% \Pr_i [ T(0,r) < T(e^{i \theta} \R) ] \leq C_1 r^{-1} ,
%\end{equation}
%for some universal constant $C_1 >0$.
%(In fact, this is shown in the more general context of {\em isotropic graphs}.
%See \cite[\S 3.3]{Kozma07LERW} for a definition of isotropic graphs, and
%\cite[\S 6]{Kozma07LERW} for a proof that $\Z^2$ is such a graph.)
We will also need the probability of escape from the diagonal $D$. This
particular case is simple because for simple random walk the
two projections $\Re X + \Im X$ and $\Re X - \Im X$ are
\emph{independent} one dimensional random walks. This makes it easy to
calculate escape probabilities in a rhombus. Namely, if $S_r=\{x:|\Re
x| + |\Im x|=r\}$ then the question whether, for random walk starting
from $i$, $T(D)\le T(S)$ or not, is equivalent to the question whether
a one-dimensional random walk hits $1$ before hitting $r$ and before a
second, independent one-dimensional random walk hits $\pm r$. Both are
well known to be $\ge 1-C/r$ so all-in all we get
\begin{equation}\label{eq:diag}
 \Pr_i [ T(0,r) < T(D) ] \leq \Pr_i [ T(S_r)<T(D) ] \leq C_1 r^{-1}.
\end{equation}

Let $A(r,R) = \set{ z \in \C \ : \  r \leq |z| \leq R}$ denote the closed annulus
of inner radius $r$ and outer radius $R$.
%\Gady{Removed the $-1$ --- please check}
Fix $r = \tfrac{|w|}{2}$.
Without loss of generality, by adjusting the constant in the statement
of the lemma,
we can assume that $r$ is large enough.
Let $A = A(r/4,r)$.  So $|w|>r$ and $w \not\in A$.
Let $V$ be the set of all
$v$ with $\Re(v) \ge 0$ such that $\Pr_i [ X_{T(0,r/2)} = v ] > 0 $.
So $r/2 \leq |v| \leq r/2+1$ and $v \in A$. Let $U$ be the set of all
$u \in \Z^2$ such that $\Pr_i [ X_{T(0,r/4-2)} = u ] >
0$. Specifically, $|u| < r/4$ and $u \not\in A$. See Figure
\ref{fig:uvw}, left.
\begin{figure}
\centering
\input{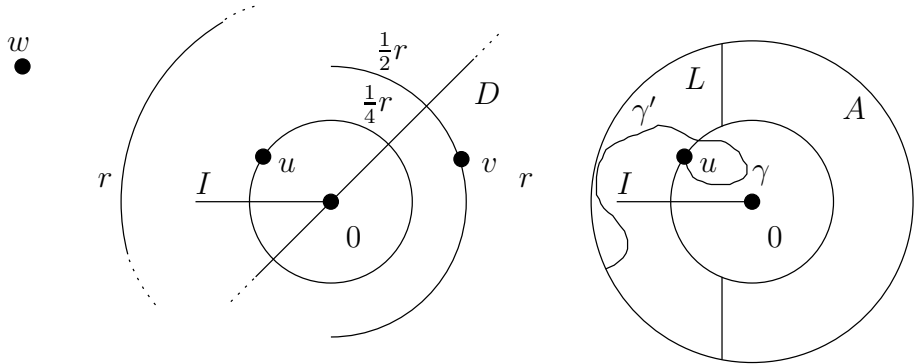}
\caption{On the left, $u$, $v$ and $w$. On the right, $L$, $\gamma$ and $\gamma'$.}
\label{fig:uvw}
\end{figure}

Fix $u \in U$ and $v \in V$.
Consider the function
$f(z) = \Pr_z [ T(w) < T(I) ]$.
This function is discrete-harmonic in the split ball $A(0,r) \setminus I$ and $0$ on $I$.
%non-negative, with boundary values $0$ on $I$ and $1$ on $w$.
Thus, there exists a path $\gamma = (u=\gamma_0,\gamma_1,\ldots,\gamma_n)$ in $\Z^2$ from
$u$ to some $\gamma_n\not\in A(0,r)$ such that $f(\cdot)$ is non-decreasing on $\gamma$; i.e. $f(\gamma_{j+1}) \geq f(\gamma_j)$
for all $0 \leq j \leq n-1$. See Figure \ref{fig:uvw}, right.

We now examine the slightly-less-than-half of $A$, $L:=\{x\in A:\Re
(x) <-r/16\}$ and divide into two cases according to whether $\gamma\cap A$ is
contained in $L$ or not. In the second case, let $v'\in
\gamma\cap(A\setminus L)$. By the
discrete Harnack inequality \cite[Theorem 6.3.9]{LL10} we have
$f(v)\ge cf(v')$ for some absolute constant. To aid the reader in
using \cite{LL10} efficiently, here are the sets we had in mind:
\begin{equation*}
\left(\begin{aligned}
  &\text{$\mathbf{K}$ \& $\mathbf{U}$}\\
  &\text{from \cite{LL10}}
\end{aligned}\right)
\qquad
\begin{aligned}
\mathbf{K} &=\{x\in \R^2:\tfrac{1}{4}\le|x|\le 1
  \textrm{ and }x_1\ge -\tfrac{1}{16}\}\\
\mathbf{U} &=\{x\in \R^2:\tfrac{3}{16}<|x|<\tfrac{3}{2}
  \textrm{ and }x_1> -\tfrac{1}{8}\}
\end{aligned}
\end{equation*}
Note that $f(\cdot)$ is discrete harmonic in $\ov{r \mathbf{U}}$.
Hence, since $f$ is non-decreasing on $\gamma$, $f(v') \geq f(u)$, and in this case
\begin{equation}\label{eq:fvfu}
f(v)\ge cf(u).
\end{equation}

Showing (\ref{eq:fvfu}) in the case that $\gamma\cap A\subset L$ is
only slightly more complicated. Let $\gamma'$ be the last portion of
$\gamma$ in $L$ i.e.\ $\{\gamma_{m+1},\dotsc,\gamma_n\}$ where $m<n$ is
maximal such that $\gamma_m\not\in A$ (see Figure \ref{fig:uvw},
right). In this case $\gamma'$ divides $L$ into two components, and
$I\cap L$ lies completely in one of them. Assume for concreteness
it is in the bottom one. Then every path crossing $L$ counterclockwise
will hit $\gamma$ before hitting $I$. Examine therefore the event
$\Ee$ that random walk starting from $v$ will exit the slit
annulus $A\setminus\{x:\Re (x) = -r/16, \Im (x) <0 \}$ by hitting the slit
from its left side. By the invariance principle \cite[\S 3.1]{LL10}, if $r$ is
sufficiently large then $\Pr(\Ee)>c_2$ for some constant $c_2>0$
independent of $r$, uniformly in $v\in V$. However, $\Ee$ implies that the random
walk traversed $L$ counterclockwise, hence it hits $\gamma$ before
hitting $I$. We get
%It is well known that the probability starting at $v$ to encompass the annulus in a given direction
%is bounded below by a constant independent of $r$.  Thus,
%since $A$ separates $u$ from $w$, the planarity of $\Z^2$ implies that
%$\gamma$ must separate $v$ from $I$.
%By Lemma \ref{lem:encompassing an annulus}, we have (e.g. by encompassing the annulus $A$) that
%by encompassing the annulus so as to hit $\gamma$ before $I$,
$$ \Pr_v [ T(\gamma) < T(I\cup\{w\}) ] > c_2. $$
%for some universal constant $c_3>0$.
%For every vertex $z \in \gamma$ we have that
%\begin{align*}
%\Pr_z [ T(w) < T(I) ] & \geq \Pr_z [ T(w) < T(I) , T(w) < T(D) ] \\
%& \geq f(u) = \Pr_u [ T(w) < T(I) , T(w) < T(D) ] .
%\end{align*}
%So we can combine everything, using the strong Markov property at the stopping time
%$T(\gamma)$, to get that for any choice of $u \in U,v \in V$ as above
Since $f(X_t)$ is a martingale up to the first time $X_{\cdot}$ hits $I$ or
$w$, we may use the strong Markov property at the stopping time $T(\gamma)$
to get
\[
f(v)\ge c_2\E [ f(X_{T(\gamma)})\,|\, T(\gamma)<T(I\cup \{w\}) \ge c_2
  f(u)
\]
i.e.~we have established (\ref{eq:fvfu}) in both cases.
%\begin{align*}
%%\label{eqn:u,v}
%\Pr_v [ T(w) < T(I) ] \geq c_3 \Pr_u [ T(w) < T(I) , T(w) < T(D) ] .
%\end{align*}

We now use the bounds on escape probabilities above to get
\begin{align}
&&\lefteqn{\Pr_i [ T(w) < T(I\cup D) ]}\qquad& \nonumber\\
&&& \leq
\Pr_i [ T(0,r/4-2) < T(I \cup D) ] \cdot \max_{u \in U} \Pr_u [ T(w) <
  T(I\cup D) ] \nonumber\\
&&& \leq \Pr_i [ T(0,r/4-2) < T(I \cup D) ] \cdot \max_{u \in U} f(u) \nonumber\\
&\textrm{\footnotesize By (\ref{eq:fvfu})}&& \leq
 \Pr_i [ T(0,r/4-2) < T(D) ] \cdot C_2 \min_{v \in V} f(v) \nonumber\\
%&\textrm{\footnotesize By (\ref{eq:Kesten}),(\ref{eq:diag})}&& \leq
% C_3 r^{-1} \cdot \min_{v \in V} f(v)
%\cdot r^{1/2} \Pr_i [ T(0,r/2) < T(I) , \Re(X_{T(0,r/2)}) \geq 0] ,
&\textrm{\footnotesize By (\ref{eq:diag})}&& \leq
 C_3 r^{-1} \cdot \min_{v \in V} f(v)
\label{eq:234}
\end{align}
where $C_2,C_3>0$ are universal constants.
The lemma now follows from applying
the strong Markov property at the stopping time $T(0,r/2)$,
\begin{align*}
\Pr_i [ T(w) < T(I) ] &\geq \Pr_i [ T(0,r/2) < T(I) ,
  \Re(X_{T(0,r/2)}) \geq 0] \cdot \min_{v\in V}f(v)\\
\textrm{\footnotesize By (\ref{eq:Kesten})}\qquad& \geq
 c r^{-1/2} \min_{v \in V} f(v)\\
\textrm{\footnotesize By (\ref{eq:234})}\qquad&\geq cr^{1/2}\Pr_i[T(w)<T(I\cup D)].
%\min_{v \in V} \Pr_v [ T(w) < T(I) ] .
\qedhere
\end{align*}
\end{proof}

We now turn to the main lemma, which uses the coupling argument
sketched in the introduction.

\begin{lem} \label{lem:Pr[T(w)<T(I)]}
There exist universal constants $C,\eps>0$ such that the following holds.
Let $I = [-x,0] \subset \R$, for some $x \geq 1$,
and let $w \in \Z^2$ such that $|w| >C$.
Then,
$$ \Pr_1 [ T(w) < T(I) ] > \Pr_i [ T(w) < T(I) ] (1+\eps) . $$
\end{lem}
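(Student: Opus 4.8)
The plan is to run a reflection coupling across the diagonal $D=\set{x+ix:x\in\Z}$ and to isolate the single ``bad'' way in which the walk from $i$ can beat the walk from $1$, which turns out to be exactly the rare event controlled by Lemma \ref{lem:I,D vs. I}. Let $\sigma(z)=i\bar z$ be reflection in the line $\set{\Re=\Im}$; it is a lattice isometry permuting the four unit steps, it fixes $D$ pointwise, and $\sigma(1)=i$. I couple a walk $X$ from $1$ with a walk $Y$ from $i$ by setting $Y_t=\sigma(X_t)$ up to $\tau:=T_X(D)$, the first visit of $X$ to $D$, and $Y_t=X_t$ afterwards; since $X_\tau\in D$ this is a genuine coupling of two simple random walks. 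Writing $p_1=\Pr_1[T(w)<T(I)]$, $p_i=\Pr_i[T(w)<T(I)]$ and letting $A_X,A_Y$ be the events that $X$, resp.\ $Y$, hits $w$ before $I$, I have $p_1=\Pr[A_X]$, $p_i=\Pr[A_Y]$ and $p_1-p_i=\Pr[A_X\setminus A_Y]-\Pr[A_Y\setminus A_X]$. The one geometric input I need is that $\Re(X_t)-\Im(X_t)$ is a $\pm1$ walk started at $1$, so before $\tau$ the walk $X$ stays in $\set{\Re>\Im}$; since $I\setminus\set 0\subset\set{\Re<\Im}$ and $0\in D$, the walk $X$ never meets $I$ before $\tau$, whereas $I$ lies on the side where $Y$ lives.

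First I would bound $\Pr[A_Y\setminus A_X]$. Suppose we are on $A_Y$ with $T_Y(w)\ge\tau$. Then $T_Y(I)>T_Y(w)\ge\tau$, so $Y$ avoids $I$ on $[0,\tau)$, and $X$ always avoids $I$ there; after $\tau$ the two walks coincide and hit $w$ (at time $T_Y(w)$) before $I$, so $A_X$ holds. Hence on $A_Y\setminus A_X$ one must have $T_Y(w)<\tau=T_Y(D)$, and since $A_Y$ also forces $T_Y(w)<T_Y(I)$, we obtain the inclusion $A_Y\setminus A_X\subseteq\set{T_Y(w)<T_Y(I\cup D)}$. By Lemma \ref{lem:I,D vs. I} applied to the walk from $i$,
\[
\Pr[A_Y\setminus A_X]\le \Pr_i[T(w)<T(I\cup D)]\le C|w|^{-1/2}p_i .
\]

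Next I would produce a constant-probability slice of $A_X\setminus A_Y$ by an explicit short detour; this is the quantitative heart of the argument and is the same mechanism as in the introductory sketch. Consider the event $G$ that $X$ makes the four steps $1\to 1-i\to -i\to 1-i\to 1$, which has probability $c_0=4^{-4}$. Along $G$ the walk $X$ stays in $\set{\Re>\Im}$ (so the coupling is still reflecting, $\tau>4$) and meets neither $I$ nor $w$, while the reflected walk $Y$ traverses $i\to -1+i\to -1\to -1+i\to i$ and so lands on $-1\in I$ at its second step without reaching $w$. Thus $A_Y$ fails on $G$, and $X$ returns to $1$ having touched neither $I$ nor $w$; by the strong Markov property at time $4$ it then hits $w$ before $I$ with probability $p_1$. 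Therefore $\Pr[A_X\setminus A_Y]\ge c_0\,p_1$.

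Combining the two bounds gives $p_1-p_i\ge c_0p_1-C|w|^{-1/2}p_i$, i.e. $(1-c_0)p_1\ge(1-C|w|^{-1/2})p_i$. Choosing $C$ in the statement large enough that $C|w|^{-1/2}<c_0/2$ yields $p_1\ge\tfrac{1-c_0/2}{1-c_0}\,p_i=(1+\eps)p_i$ with $\eps=\tfrac{c_0/2}{1-c_0}>0$, as required. I expect the main obstacle to be the case analysis behind the inclusion $A_Y\setminus A_X\subseteq\set{T_Y(w)<T_Y(I\cup D)}$: one must follow the coupling through the coalescence time and check that the \emph{only} scenario in which the $i$-walk strictly beats the $1$-walk is reaching the far target $w$ while still on the wrong side of the diagonal, which is exactly the event whose smallness Lemma \ref{lem:I,D vs. I} guarantees. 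The secondary point requiring care is verifying that the detour $G$ keeps $X$ strictly in $\set{\Re>\Im}$, so that the coupling remains reflecting and $Y$ genuinely lands on $I$; this is an elementary check on the four listed vertices.
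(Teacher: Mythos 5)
Your proposal is correct and follows essentially the same route as the paper: the same reflection coupling across $D$, the same appeal to Lemma \ref{lem:I,D vs. I} to control the only scenario where the walk from $i$ can beat the walk from $1$ (namely $\set{T_Y(w) < T_Y(I\cup D)}$), and the same device of an explicit short excursion that forces the reflected walk onto $-1\in I$ while the other walk returns home unharmed. The remaining differences are cosmetic: you bookkeep directly via $\Pr[A_X\setminus A_Y]-\Pr[A_Y\setminus A_X]$ instead of the paper's three-event split $\Aa,\Bb,\Cc$, and your four-step detour yields a gain $4^{-4}p_1$ where the paper's six-step detour (which ends with the walks coalesced) yields $4^{-6}p_i$; both give a valid universal $\eps$.
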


(The proof will give $\eps=4^{-7}$.)

\begin{proof}
%For any event $\Ee$ denote
%\[
%\Delta(\Ee)=\Pr^1(\Ee)-\Pr^i(\Ee)
%\]
%so our purpose is to estimate $\Delta[T(w)<T(I)]$.
We couple two random walks on $\Z^2$ started at $1$ and $i$, by constraining them to be the mirror
image of each other around $D = \set{ x+ix \ : \ x \in \Z }$ until
they meet. When they do, they glue and continue walking together. In
formulas, given the random walk $(X_t)$,
let $(Y_t)$ be a random walk coupled to $(X_t)$ as follows.
Set $X_0 = 1$ and $Y_0 = i$.  For $t>0$, if $Y_{t-1} \neq X_{t-1}$,
%let $Y_t = Y_{t-1} + \ov{ X_t - X_{t-1} } \cdot i$.
let $Y_t = i \ov{X_t}$.
If $Y_{t-1} = X_{t-1}$, then let $Y_t = X_t$.
It is immediate that $(Y_t)$ is also a random walk.

Let $ \tau = \min \set{ t \geq 0 \ : \ X_t = Y_t } , $
be the coupling time.
%We claim that $\tau = T_D$.
%Indeed,
%It is simple to show by induction that for all $t \leq \tau$,
%$Y_t = i \ov{X_t}$.
%So
For all $t \leq \tau$,
$\Re(Y_t) = \Im(X_t)$ and $\Im(Y_t) = \Re(X_t)$.
Hence, for any $t \leq \tau$, we have that $Y_t = X_t$ if
and only if $Y_t , X_t \in D$.
So we conclude that $\tau = T(D)$.

Now, let $T^1(I) = \min \set{ t \geq 0 \ : \ X_t \in I }$
and $T^i(I) = \min \set{ t \geq 0 \ : \ Y_t \in I }$ be the hitting times of $I$
for $X_{\cdot}$ and $Y_{\cdot}$ respectively.  Similarly,
let $T^1(w),T^i(w)$ be the hitting times
of $w$ for $X_{\cdot}$ and $Y_{\cdot}$ respectively.
Since $X_0=1$, we have that $D$ separates $X_0$ from $I$, so $\tau = T(D) \leq T^1(I)$.
Thus, $T^1(I) \geq T^i(I)$ always.

\begin{figure}
\centering
\input{ABC.pstex_t}
\caption{The events $\Aa$, $\Bb$ and $\Cc$.}
\label{fig:ABC}
\end{figure}

Let $\Aa$, $\Bb$ and $\Cc$ be the three events depicted in 
Figure \ref{fig:ABC}. Formally, let $\Aa$ be the event $\set{T(D)\leq T^1(w),T(D)\leq T^i(w)}$.
Let $\Bb$ be the event $\{T^1(w)<T(D)\leq T^i(w)\}$,
and let $\Cc$ be the event $\set{ T^i(w) < T(D) \leq T^1(w) }$. 
Note that $\Aa,\Bb$ and $\Cc$ are pairwise disjoint and their union is
the whole space. Furthermore, either $\Pr[\Bb] = 0$ or $\Pr[\Cc]=0$,
depending on whether $\Re(w) < \Im(w)$ or $\Re(w) > \Im(w)$
respectively (if they are equal both events are empty).

Now, on the event $\Aa$ we have that $T^i(w) = T^1(w)$.
Thus, on the event $\Aa$, the event $\set{T^i(w) < T^i(I)}$ is contained in
the event $\set{T^1(w) < T^1(I)}$.
Hence,
\begin{align}
%\label{eqn:A}
\Pr [ T^1(w) < T^1(I) , \Aa ] & - \Pr [ T^i(w) < T^i(I) , \Aa ]
\nonumber \\
%\Delta [T(w)<T(I),\Aa]
& = \Pr [ T^i(I) < T(D) \leq T^i(w) = T^1(w) < T^1(I) ]. \nonumber
\end{align}

\begin{figure}
\begin{centering}
\input{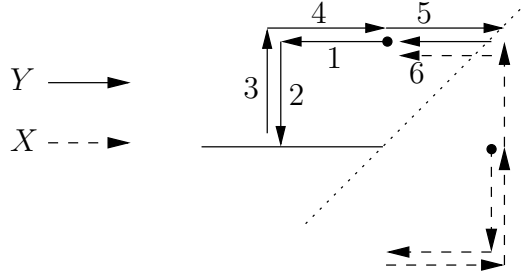}
\caption{The event that gives $4^{-6}$ in the proof.}
\label{fig:coupling}
\end{centering}
\end{figure}%

Next, consider the event
$\set{ Y_1 = i-1, Y_2 = -1, Y_3 = i-1 , Y_4 = i, Y_5 = 1+i , Y_6 = i}$
(which is the same as the event
$\set{ X_1 = 1-i, X_2 = -i, X_3 = 1-i , X_4 = 1, X_5 = 1+i , X_6 =
  i}$, see Figure \ref{fig:coupling}),
which implies that $T^i(I) < T(D) \leq T^i(w) = T^1(w)$.
%\Ariel{added something}
(Here we use that $x \geq 1$, so $-1 \in I$.)
We have that
\begin{align}
\label{eqn:A}
\Pr [ T^1(w) < T^1(I) , \Aa ] & - \Pr [ T^i(w) < T^i(I) , \Aa ]
\geq  4^{-6} \Pr_i [ T(w) < T(I) ]  .
\end{align}

%% \begin{figure}
%% \centering
%% %\begin{minipage}[t]{0.9\textwidth}
%% \input{coupling.pstex_t}
%% \caption{An example of a coupling of two random walks reflected around $D$ (left).
%% On the right, we have the event that gives $4^{-6}$ in the proof.}
%% \label{fig:coupling}
%% %\end{minipage}
%% \end{figure}

As for the event $\Bb$,
we have that $\Bb \subset \set{ T^1(w) < T^1(I) }$.
So,
\begin{equation}
\label{eqn:B}
\Pr [ T^1(w) < T^1(I) , \Bb ] -\Pr [ T^i(w) < T^i(I),\Bb] = \Pr [ \Bb
] - \Pr [ T^i(w) < T^i(I),\Bb] \ge 0
%\Pr [ T^i(w) < T^i(I) , \Bb ] .
%= \Pr [ T^i(w) \geq T^i(I) , B ] .
\end{equation}

Finally, the event $\Cc$ implies $T^i(w) < T(D)$ and therefore
\begin{multline}
\label{eqn:C}
\Pr [ T^1(w) < T^1(I) , \Cc ] - \Pr [ T^i(w) < T^i(I) , \Cc ] \ge\\
- \Pr [ T^i(w) < T^i(I) , \Cc ] \ge
 - \Pr_i [ T(w) < T(I\cup D) ].
\end{multline}

%% For the event $\Cc$,
%% we use the strong Markov property at the stopping time $T^i(w)$.
%% At this time, on the event $T^i(w) < T(D)$,
%% we have that $X_{T^i(w)} = \hat{w}$ where $\hat{w} = \Im(w) + i \Re(w)$.
%% Thus,
%% \begin{align}
%% \label{eqn:C}
%% \Pr [ T^1(w) < T^1(I) , C ] & - \Pr [ T^i(w) < T^i(I) , C ]
%% \nonumber \\
%% &
%% = \Pr [ T^i(w) < T^i(I) , T^i(w) < T(D) ] \cdot \sr{ \Pr_{\hat{w}} [ T(w) < T(I) ] - 1 } \nonumber \\
%% &
%% = - \Pr [ T^i(w) < T^i(I) , T^i(w) < T(D) ] \cdot \Pr_{\hat{w}} [ T(w) \geq T(I) ]  .
%% \end{align}

Combining \eqref{eqn:A}, \eqref{eqn:B} and \eqref{eqn:C}, we get that
\begin{align} \label{eqn:A,B,C}
\Pr_1 [ T(w) < T(I) ] & - \Pr_i [ T(w) < T(I) ] \nonumber \\
& \geq 4^{-6} \Pr_i [ T(w) < T(I) ] %+ \Pr [ T^i(w) \geq T^i(I) , B ] \nonumber \\
%& \qquad
- \Pr_i [ T(w) < T(I\cup D) ] .
\end{align}

We have not placed any restrictions on the constant $C$ from the
statement of the lemma so far.
Let $C_4$ be the constant from Lemma \ref{lem:I,D vs. I}.
%With hindsight,
We now choose $C \geq \left(C_4 4^7\right)^2$.
Thus, if $|w| > C$ then by Lemma \ref{lem:I,D vs. I},
$$ \Pr_i [ T(w) < T(I\cup D) ] < 4^{-7} \Pr_i [ T(w) < T(I) ] . $$
Plugging this into \eqref{eqn:A,B,C} completes the proof of the lemma.
\end{proof}

%% We move on to determine the structure of the Laplacian-$\infty$ path.
%% First we determine the first step of this path.

%% Some notation:
%% We divide $\Z^2$ into four quarter-planes using the two diagonals $D = \set{ x+iy \in \Z^2 \ : \ y=x }$
%% and $D^{\perp} = \set{ x+iy \in \Z^2 \ : \ y=-x }$.
%% That is,
%% $$ \Z^2 = \bigcup_{j=0}^3 Q_j , $$
%% where
%% $ Q_j = e^{i j \pi / 2} Q$
%% and $Q = \set{ x+iy \in \Z^2 \ : \ |y| \leq x }$.
%% For $j=0,1,2,3$, let $e_j$ be the unique neighbor of $0$ in $Q_j$.

%% \begin{thm} \label{thm:gamma1}
%% Let $w \in \Z^2$.
%% Let $(\gamma_t)_{t \geq 0}$ be the Laplacian-$\infty$ path on $\Z^2$,
%% started at $\gamma_0 = 0$ with target $w$.
%% Then,
%% $\gamma_1$ is uniformly distributed over the set
%% $\set{ e_j \ : \ w \in Q_j }$.
%% \end{thm}

%% \begin{rem}
%% Note that there are at most two sets $Q_j,Q_k$
%% that contain $w$, and if $w \not\in D \cup D^{\perp}$
%% there is exactly one such set.
%% So $\gamma_1$ only has one possible choice
%% in this case.
%% \end{rem}

The last piece of the puzzle is to determine the first step of
the Laplacian-$\infty$ path.

\begin{lem} \label{thm:gamma1}
Let $w \in \Z^2$ with $\Re (w)> |\Im w|$. Let $(\gamma_t)_{t \geq 0}$ be
the Laplacian-$\infty$ path on $\Z^2$, started at $\gamma_0 = 0$ with
target $w$. Then $\gamma_1=1$.

If $\Re (w) = \Im (w) > 0$ then $\gamma_1 = 1$ with probability $\tfrac{1}{2}$
to be and $\gamma_1 = i$ with probability $\tfrac{1}{2}$.
\end{lem}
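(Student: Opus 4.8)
The plan is to read off the first step directly from the maximization rule. At time $t=1$ the past is the single point $\{0\}$, so the rule selects the neighbor $x\in\{1,i,-1,-i\}$ of the origin maximizing $f(x):=\Pr_x[T(w)<T(0)]$. I will show that when $\Re(w)>|\Im(w)|$ the value $f(1)$ strictly exceeds each of $f(i),f(-1),f(-i)$, forcing $\gamma_1=1$; the boundary case $\Re(w)=\Im(w)$ will produce an exact tie between $1$ and $i$.

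All three strict comparisons will come from a single reflection-coupling statement, applied along three lines through the origin. Let $\ell\ni 0$ be the axis of a lattice reflection $\rho$, and suppose $1$ and $w$ lie strictly on the same open side of $\ell$ while $\rho(1)$ lies on the other. Couple a walk $X$ from $1$ with $Y:=\rho(X)$ from $\rho(1)$ until their first meeting, then let them move together; since each step changes $\Re X\pm\Im X$ (resp.\ $\Re X$) by at most one, the walk lands on $\ell$ before it can cross, so the meeting time is exactly $\tau:=T(\ell)$, and $X_t=Y_t$ for $t\ge\tau$. I claim
$$\Pr_1[T(w)<T(0)]>\Pr_{\rho(1)}[T(w)<T(0)].$$
The crucial feature is that the obstacle $0$ lies \emph{on} $\ell$, so both walks can reach $0$ only after $\tau$. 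Splitting on the events $\Aa=\{\tau\le T^X(w),\,\tau\le T^Y(w)\}$, $\Bb=\{T^X(w)<\tau\le T^Y(w)\}$, $\Cc=\{T^Y(w)<\tau\le T^X(w)\}$ exactly as in Lemma \ref{lem:Pr[T(w)<T(I)]}, the event $\Cc$ is empty (to reach $w$, which sits on $1$'s side, the walk $Y$ must first cross $\ell$), and $\Aa$ now contributes \emph{zero}: after $\tau$ the two walks coincide, and since both $w$ and $0$ are reached only after $\tau$ on $\Aa$, the hitting orders of $w$ and $0$ agree for $X$ and $Y$. Hence the difference collapses to $\Pr[\Bb\cap\{T^Y(0)\le T^Y(w)\}]\ge 0$, and I make it strictly positive with one explicit trajectory: let $X$ travel from $1$ to $w$ without touching $\ell$, return to $1$ still avoiding $\ell$, and step to $0$. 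This puts $X$ (hence $Y=\rho(X)$) on $\ell$ for the first time at the point $0$, so $T^Y(0)=\tau\le T^Y(w)$ and $T^X(w)<\tau$, placing the trajectory in $\Bb\cap\{T^Y(0)\le T^Y(w)\}$ with positive probability.

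Applying this with the diagonal reflection $z\mapsto i\ov z$ (sending $1\mapsto i$), the antidiagonal reflection $z\mapsto -i\ov z$ (sending $1\mapsto -i$), and the imaginary-axis reflection $z\mapsto -\ov z$ (sending $1\mapsto -1$), the required side conditions become $\Re(w)>\Im(w)$, $\Re(w)>-\Im(w)$ and $\Re(w)>0$ respectively, all implied by $\Re(w)>|\Im(w)|$. This gives $f(1)>\max\{f(i),f(-i),f(-1)\}$, hence $\gamma_1=1$; neighbors equal to $w$ are handled trivially, and the trajectory above exists because $1$ and $w$ always lie in a common connected half-lattice. For the boundary case $\Re(w)=\Im(w)>0$ the point $w$ lies on the diagonal $D=\set{x+ix}$, so $z\mapsto i\ov z$ fixes $w$ and $0$ while swapping $1$ and $i$; by this exact symmetry $f(1)=f(i)$. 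The other two reflections still satisfy their strict side conditions ($\Re(w)\pm\Im(w)=2\Re(w)>0$ and $\Re(w)>0$), giving $f(1)=f(i)>\max\{f(-1),f(-i)\}$. Thus $1$ and $i$ are the two equal maximizers and the tie-breaking rule selects each with probability $\tfrac12$.

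The main obstacle is the strict inequality in the coupling step. With the obstacle $\{0\}$ sitting on the reflection axis, the ``easy'' part of the argument of Lemma \ref{lem:Pr[T(w)<T(I)]} gives no surplus at all: the $\Aa$-contribution cancels exactly, rather than producing the $4^{-6}$ term available there when the past is a genuine interval. Consequently all of the strictness must be extracted from the $\Bb$-event, which is precisely what the explicit return-to-$0$ trajectory supplies; verifying that this trajectory indeed lands on $\ell$ first at $0$ (so that $Y$ hits $0$ no later than $w$) is the only delicate point.
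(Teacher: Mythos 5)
Your proposal is correct and follows essentially the same route as the paper: a reflection coupling through the diagonal, the antidiagonal, and the imaginary axis, using the key fact that the obstacle $0$ lies on each reflection axis, with the tie case $\Re(w)=\Im(w)$ handled by exact symmetry. The only difference is cosmetic: where the paper asserts strictness tersely (``the event that $X$ hits $w$ before $D$ has positive probability''), you extract it via the $\Aa,\Bb,\Cc$ decomposition and an explicit return-to-$0$ trajectory, which is if anything a more carefully justified version of the same step.
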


\begin{proof}%[Proof of Theorem \ref{thm:gamma1}]
We start with the case $\Re (w)>|\Im (w)|$. Recall that $\gamma_1$ is the neighbor $e$ of 0 that maximizes
the probability $\Pr_{e} [ T(w) < T(0) ]$.

%If $w \in Q_j \cap Q_k$, then by symmetry around the half-diagonal
%$Q_j \cap Q_k$ we get that
%$\Pr_{e_j} [ T(w) < T(0) ] = \Pr_{e_k} [ T(w) < T(0) ]$.
%
%So we are left with showing that if $w \in Q_j \setminus Q_k$,
%then
%$\Pr_{e_j} [ T(w) < T(0) ] > \Pr_{e_k} [ T(w) < T(0) ]$.
%By symmetry of $\Z^2$ we can assume without loss of generality
%that $w \in Q_0 \setminus Q_1$.

As in the proof of Lemma \ref{lem:Pr[T(w)<T(I)]}, we couple two random walks
$(X_t),(Y_t)$, starting at $X_0 = 1$ and $Y_0=i$
respectively, by reflecting them around $D$.
We use $T^1(0),T^1(w),T^i(0),T^i(w)$ to denote the hitting times
of $0$ and $w$ by these walks, in the obvious way.
Recall from the proof of Lemma \ref{lem:Pr[T(w)<T(I)]},
that the coupling time of these walks is $T(D)$, the hitting time of $D$.

Since $D$ separates $w$ from $i$, we have that $T^1(w) \leq T^i(w)$.
Thus, the event $\{ T^i(w) < T^i(0) \}$ implies the event $\set{
  T^1(w) < T^1(0) }$. Further, this inclusion is strict --- the event
that $X$ hits $w$ before $D$ has positive probability. Hence
\[
\Pr_1 [ T(w) < T(0) ] > \Pr_i [ T(w) < T(0) ].
\]
Showing that $\Pr_1[ T(w) < T(0) ] > \Pr_e[ T(w) < T(0) ]$ for
$e=-1,-i$ is done likewise by reflecting through the imaginary line or
the opposite diagonal $D^*=\{x-ix:x\in\Z\}$, respectively. This
completes the proof of the case $\Re (w)>|\Im (w)|$.
For the case $\Re (w) = \Im (w) >0$, just note that the problem is now
symmetric to reflection through the diagonal $D$, so
$\Pr_1[T(w)<T(0)]=\Pr_i[T(w)<T(0)]$, so the walker chooses among them
equally. Both are larger than the probabilities at $-1$ and $-i$,
again by reflecting through the diagonal $D^*$.
\end{proof}

%% \begin{thm} \label{thm:main thm}
%% There exists a constant $c > 0$ such that the following holds.
%% Let $w \in \Z^2$ be such that $\min \set{ |\Re(w)|, |\Im(w)|} > c$.
%% Let $(\gamma_t)_{t \geq 0}$ be the Laplacian-$\infty$ path on $\Z^2$,
%% started at $\gamma_0 = 0$ with target $w$.
%% Then, $\gamma$ is the path $\gamma_t = \gamma_1 t$ for all $t \geq 0$,
%% where $\gamma_1$ is given by Theorem \ref{thm:gamma1}.
%% \end{thm}

\begin{proof}[Proof of Theorem \ref{thm:main thm}]
We take $C>0$ so that Lemma \ref{lem:Pr[T(w)<T(I)]} holds with this
constant $C$.
%By symmetry of $\Z^2$, we can assume without loss of generality
%that $w$ is such that $\gamma_1 = 1$.
%The original condition $\min \set{ |\Re(w)|  , |\Im(w)| } > c$
%ensures that $|\Im(w)| > c$ after this rotation.
We prove the theorem by induction on $t$.
Let $w = a+ib$.
The case of $t=1$ is handled
by Lemma \ref{thm:gamma1}. Assume therefore that $\gamma_s = s$
for all $0 \leq s \leq t-1$.
Let $I = \set{ \gamma_s \ : \ 0 \leq s \leq t-1 }$.
Let $f:\Z^2 \to [0,1]$ be the function $f(z) = \Pr_z [ T(w) < T(I) ]$.
Translating by $-(t-1)$,
since $|w-(t-1)| \geq |\Im(w)| > C$,
we can use Lemma \ref{lem:Pr[T(w)<T(I)]}, to get that
$f(t) > f(t-1+i)$. Reflecting through the real line and using Lemma
\ref{lem:Pr[T(w)<T(I)]} again we get $f(t) > f(t-1-i)$. Thus, $\gamma_t =
t$ and the theorem is proved.
\end{proof}

%
%\appendix
%
%\section{Random walk estimate}
%
%We prove a lemma required for our proofs.
%This lemma is classical, and we only include it for completeness.
%
%
%
%\begin{lem} \label{lem:encompassing an annulus}
%%For any $\eps>0$ there exists a constant $c(\eps)>0$ such that the following holds.
%There exists a universal constant $c>0$ such that the following holds.
%
%Let $w \in \Z^2$.
%Let $\theta = \arg(w)$.
%Define
%$$ D^{\pm} = \set{ x e^{i (\theta \pm \pi/2)} \ : \ 0 \leq x \in \R } . $$
%%and define the stopping times
%%$ \tau^{\pm} = T(D^{\pm}) . $
%Let $\sigma = \min \set{ t \geq 0 \ : \ |X_t| \leq |w|/2 \textrm{ or } |X_t| \geq 2|w| }$,
%be the exit time from the annulus of inner radius $|w|/2$ and outer radius $2|w|$.
%Then,
%$$ \Pr_z [ T(D^+) < T(D^-) , T(D^+) < \sigma ] \geq c . $$
%\end{lem}
%
%
%\begin{proof}
%\note{this needs a proof or reference}
%\end{proof}
%

% -----------------------------------------------------------------------------------
% ---------------------------- BIBLIOGRAPHY -----------------------------------------
% -----------------------------------------------------------------------------------

%\bibliographystyle{AYbibstyle}
%\bibliography{mybib}

\begin{thebibliography}{1}



\bibitem{B45} \textsc{B.~Bunny}. {\em I knew I shoulda taken that left turn at
  Albuquerque}, available from\\
  \url{http://www.youtube.com/watch?v=e8TUwHTfOOU}

\bibitem{K87} \textsc{H.~Kesten}.
Hitting probabilities of random walks on $Z^d$.
\emph{Stochastic Process. Appl.}
{\bf 25:2} (1987), 165--184.



%\bibitem{Kozma07LERW}
%\textsc{G.~Kozma}.
%\newblock The scaling limit of loop-erased random walk in three dimensions.
%\newblock \emph{Acta Math.} \textbf{199} (2007), 29--152.






\bibitem{Lawler80LERWdfn}
\textsc{G.~F. Lawler}.
\newblock A self avoiding walk.
\newblock \emph{Duke Math. J.} \textbf{47} (1980), 655--694.




\bibitem{L87} \textsc{G.~F. Lawler}.
Loop-erased self-avoiding random walk and the Laplacian random walk.
\emph{J. Phys. A}  {\bf 20:13}  (1987),  4565--4568.




\bibitem{L91}
\textsc{G.~F. Lawler}.
\newblock \emph{Intersections of random walks}.
\newblock Birkh\"{a}user, Boston, MA (1991).




\bibitem{LawlerLaplacian_b}
\textsc{G.~F. Lawler}.
\newblock The {Laplacian}-$b$ random walk and the {Schramm}-{Loewner}
  evolution.
\newblock \emph{Illinois J. Math.} \textbf{50} (2006), 701--746.



\bibitem{LL10} \textsc{G.~F. Lawler and V. Limic}. {\em Random walk: a
  modern introduction}, Cambridge Studies in Advanced Mathematics 123,
  Cambridge University Press, Cambridge, 2010.








\bibitem{LSW04LERW}
\textsc{G.~F. Lawler, O.~Schramm and W.~Werner}.
\newblock Conformal invariance of planar loop-erased random walks and uniform
  spanning trees.
\newblock \emph{Ann. Probab.} \textbf{32}~(1B) (2004), 939--995.







\bibitem{LEP86}
\textsc{J.~W. Lyklema, C.~Evertsz  and L.~Pietronero}.
The Laplacian Random Walk.
\emph{Europhysics Letters}
{\bf 2:2} (1986), 77--82.




\bibitem{Schramm00SLE}
\textsc{O.~Schramm}.
\newblock Scaling limits of loop-erased random walks and uniform spanning
  trees.
\newblock \emph{Israel J. Math.} \textbf{118} (2000), 221--288.



\bibitem{S01}
\textsc{S.~Smirnov}.
Critical percolation in the plane. \emph{Draft} (2001), available from\\
 \href{http://arxiv.org/abs/0909.4499}{\nolinkurl{arxiv:0909.4499}}



\bibitem{W09}
\textsc{W.~Werner}.
Lectures on two-dimensional critical percolation. \emph{Statistical mechanics}, 297--360, IAS/Park City Math. Ser., 16, Amer. Math. Soc., Providence, RI, 2009.
%\bibitem{SW01}
%\textsc{S.~Smirnov and W.~Werner}.
%Critical exponents for two-dimensional percolation.
%\emph{Math. Res. Lett.}  {\bf 8:5--6}  (2001),  729--744.


\end{thebibliography}

\end{document}